\numberwithin{equation}{section}
\newtheorem{thm}{Theorem}[section]
\newtheorem{cor}[thm]{Corollary}
\newtheorem{lem}[thm]{Lemma}
\newtheorem{prop}[thm]{Proposition}
\newtheorem{defn}[thm]{Definition}
\newtheorem{exam}[thm]{Example}
\newtheorem{rem}[thm]{Remark}
\DeclareMathOperator{\Supp}{Supp}
\DeclareMathOperator{\Spec}{Spec}
\newcommand{\gr}{\mbox{grade}\,}
\def\id{\operatorname{\mathsf{id}}}
\newcommand{\vdim}{\mbox{vdim}\,}
\def\cd{\operatorname{\mathsf{cd}}}
\newcommand{\h}{\mbox{ht}\,}
\newcommand{\E}{\mbox{E}}
\renewcommand{\H}{\mbox{H}}
\newcommand{\V}{\mbox{V}}
\newcommand{\fa}{\mathfrak{a}}
\newcommand{\fm}{\mathfrak{m}}
\newcommand{\fp}{\mathfrak{p}}
\def\depth{\operatorname{\mathsf{depth}}}
\def\Hom{\operatorname{\mathsf{Hom}}}
\def\dim{\operatorname{\mathsf{dim}}}
\def\Ext{\operatorname{\mathsf{Ext}}}
\begin{document}
%%% -----------------------------------
\title[On the relative cohen-macaulay modules ]
 { On the relative cohen-macaulay modules}

%%% -----------------------------------
%%% -----------------------------------
%%% -----------------------------------
\bibliographystyle{99}
%%% -----------------------------------
%%% -----------------------------------
%%% -----------------------------------

     \author[M. R. Zargar]{Majid Rahro Zargar }

\address{Majid Rahro Zargar\\ School of Mathematics, Institute for Research in Fundamental Sciences (IPM), P.O. Box: 19395-5746, Tehran, Iran.}
\email{zargar9077@gmail.com}

\subjclass[2010]{Primary: 13D45; Secondary: 13C14}
\keywords{Local cohomology, Gorenstein module, relative Cohen-Macaulay.\\
This research was in part supported by a grant from IPM (No. 92130024)}

%%% -----------------------------------
%%% -----------------------------------
%%% -----------------------------------

\begin{abstract}Let $R$ be a commutative Noetherian local ring and let $\fa$ be a proper ideal of $R$. In this paper, as a main result, it is shown that if $M$ is a Gorenstein $R$-module with $c=\h_{M}\fa$, then $\H_{\fa}^{i}(M)=0$ for all $i\neq c$ is completely encoded in homological properties of $\H_{\fa}^{c}(M)$, in particular in its Bass numbers. Notice that, this result provides a generalization of a result of Hellus and Schenzel which has been proved before, as a main result, in the case where $M=R$.

\end{abstract}

\maketitle

%%% -----------------------------------
\section{introduction}Throughout this paper, $R$ is a commutative Noetherian ring, $\fa$ is a proper ideal of $R$ and $M$ is an $R$-module. For a prime ideal $\fp$ of $R$, the residue class field $R_{\fp}/\fp R_{\fp}$ is denoted by $k(\fp)$. For each non-negative integer $i$, let $\H_{\fa}^{i}(M)$ denotes the $i$-th local cohomology module of $M$ with respect to $\fa$; see \cite{BRS} for its definition and basic results. A Gorenstein module over a local ring $R$ is a maximal
Cohen-Macaulay module of finite injective dimension. This concept was introduced by Sharp in \cite{RSH} and studied extensively by him and other authors. In the present paper, we will use the concept of relative Cohen-Macaulay modules which is a generalization of the concept of Cohen-Macaulay modules. This kind of modules has been studied in \cite{HS1} under the title of cohomologically complete intersections and continued in \cite{MZ}. In \cite{HS1} Hellus and Schenzel, as a main result, showed that if $(R,\fm, k)$ is a Gorenstein local ring and $\fa$ is an ideal of $R$ with {$\h_{R}\fa=c$} and {$\dim R/ \fa =d$} such that $R$ is relative Cohen-Macaulay in {$\V(\fa)\setminus\{\fm\}$}, then the following statements are equivalent.
\begin{itemize}
\item[(i)]{{$\H^{i}_\fa(R)=0$ for all $i\neq c$, i.e $R$ is relative Cohen-Macaulay with respect to $\fa$.}}
\item[(ii)]{{$\H^d_{\fm}(\H^{c}_{\fa}({R}))\cong \E_{R}(k)$ and $\H^i_{\fm}(\H^{c}_{\fa}({R}))=0$ for all $i\neq d$. }}
\item[(iii)]{{$\Ext_{R}^{d}(k,\H^{c}_{\fa}(R))\cong k$ and $\Ext_{R}^{i}(k,\H^{c}_{\fa}(R))=0$ for all $i\neq d$}.}
\item[(iv)]{{$\mu^i(\fm,\H_{\fa}^c(R))=\delta_{di}$}}.
\end{itemize}
{Moreover, if $\fa$ satisfies the above conditions, it follows that $\hat{R}^{\fa}\cong\Hom_{R}(\H^{c}_\fa(R),\H^{c}_\fa(R))$ and $\Ext^{i}_{R}(\H^{c}_\fa(R),\H^{c}_\fa(R))=0$ for all $i\neq0$, where $\hat{R}^{\fa}$ denotes the $\fa$-adic completion of $R$.}

As a main result, in Theorem 2.9, we generalize the above result for a Gorenstein $R$-module $M$. Indeed, it is shown that if $M$ is a Gorenstein $R$-module with {$c =\h_{M}\fa$} and {$d=\dim M/\fa M$} such that $M$ is relative Cohen-Macaulay in {$\Supp_{R}(M/\fa M)\setminus\{\fm\}$}, then the following statements are equivalent.
 \begin{itemize}
\item[(i)]{{$\H^{i}_\fa(M)=0$ for all $i\neq c$, i.e $M$ is relative Cohen-Macaulay with respect to $\fa$.}
\item[(ii)]{$\H^d_{\fm}(\H^{c}_{\fa}({M}))\cong \E_{R}(k)^{\tiny{r(M)}}$ and $\H^i_{\fm}(\H^{c}_{\fa}({M}))=0$ for all $i\neq d$. }
\item[(iii)]{$\Ext_{R}^{d}(k,\H^{c}_{\fa}(M))\cong k^{\tiny{r(M)}}$ and $\Ext_{R}^{i}(k,\H^{c}_{\fa}(M))=0$ for all $i\neq d$}.
\item[(iv)]{$\mu^i(\fm, \H_{\fa}^c(M))={\tiny{r(M)}}\delta_{di}$}.}
\end{itemize}
{Moreover, if $\fa$ satisfies the above conditions, it follows that $\Hom_{R}(M,M)\otimes_{R}\hat{R}^{\fa}\cong\Hom_{R}(\H^{c}_\fa(M),\H^{c}_\fa(M))$ and $\Ext^{i}_{R}(\H^{c}_\fa(M),\H^{c}_\fa(M))=0$ for all $i\neq0$, where $r(M)$ denotes the type of $M$.}
\section{Main results}
\begin{defn} \emph{We say that a finitely generated $R$-module $M$ is \textit{relative Cohen–
Macaulay with respect to $\fa$} if there is precisely one non-vanishing local cohomology module of $M$ with
respect to $\fa$. Clearly this is the case if and only if $\gr(\fa,M)=\cd(\fa,M)$, where $\cd(\fa,M)$ denotes the cohomological dimension of $M$ with respect to $\fa$, which is the largest integer $i$ for which $\H_{\fa}^i(M)\neq0$. Observe that the notion of relative Cohen-Macaulay module is connected with the notion of cohomologically complete intersection ideal which has been studied in \cite{HS1}.
}
\end{defn}
\begin{rem}\emph{Let $M$ be a relative Cohen-Macaulay module with respect to $\fa$ and let $\cd(\fa,M)=n$. Then, in view of \cite[Theorems 6.1.4, 4.2.1, 4.3.2]{BRS}, it is easy to see that $\Supp\H^{n}_{\fa}(M)=\Supp({M}/{\fa M})$ and $\h_{M}\fa=\gr(\fa,M)$, where
$\h_{M}\fa =\inf\{\ \dim_{R_{\fp}}M_{\fp} |~ \fp\in\Supp(M/\fa M) ~\}$.}
\end{rem}
\begin{defn}\emph{( See \cite[Theorem 3.11]{RSH}.) A non-zero finitely generated module $M$ over a local ring $(R,\fm)$ is said to be a Gorenstein $R$-module if the following equalities hold true $$\depth M=\dim M=\id_{R}M=\dim R.$$}
\end{defn}
\begin{defn} \emph{For any prime ideal $\fp$ of $R$, the $i$-th Bass number of $M$ with respect to $\fp$ is defined by $\mu^{i}(\fp, M):=\vdim_{k{(\fp)}}\Ext_{R_{\fp}}^{i}(k(\fp),M_{\fp})$. If  $(R,\fm)$ is local and $M$ is finitely generated of depth $t$, then the number {$\mu^{t}(\fm,M)$} is called type of $M$ and is denoted by $r(M)$.}
\end{defn}

Let $M$ be a non-zero finitely generated module over a local ring $(R,\fm)$ and let $\fa$ be an ideal of $R$. Let $\E_{R}^{\textbf{.}}(M)$
be a minimal injective resolution for $M$. It is a well-known fact that
$\E_{R}^{\textbf{.}}(M)^i=\bigoplus_{\tiny{\fp\in\Spec(R)}} \mu^{i}(\fp,M)\E_{R}(R/\fp)$,
where $\E_{R}(R/\fp)$ denotes the injective hull of $R/\fp$.
Now, let $c=\gr(\fa,M)$. Then, $\Gamma_{\fa}(\E_{R}^{\textbf{.}}(M)^i)=0$
for all $i<c$.
Therefore $\H_{\fa}^c(M)=\ker(\Gamma_{\fa}(\E_{R}^{\textbf{.}}(M))^c\longrightarrow\Gamma_{\fa}(\E_{R}^{\textbf{.}}(M))^{c+1})$.
This observation provides an embedding $0\longrightarrow \H_{\fa}^c(M)[-c]\longrightarrow\Gamma_{\fa}(\E_{R}^{\textbf{.}}(M))$ of complexes of $R$-modules where $\H_{\fa}^c(M)[-c]$ is considered as a complex concentrated in homological degree zero.

\begin{defn}\emph{( See \cite[Definition 4.1]{HS}.) The cokernel of the embedding {$$0\longrightarrow\H_{\fa}^c(M)[-c]\longrightarrow\Gamma_{\fa}(\E_{R}^{\textbf{.}}(M))$$} is denoted by $C^{\textbf{.}}_{M}(\fa)$ and is called the truncation complex. Therefore, there is a short exact sequence  {$$0\longrightarrow \H_{\fa}^c(M)[-c]\longrightarrow\Gamma_{\fa}(\E_{R}^{\textbf{.}}(M))\longrightarrow C^{\textbf{.}}_{M}(\fa)\longrightarrow 0$$}
of complexes of $R$-modules. We observe that {$\H^{i}( C^{\textbf{.}}_{M}(\fa))\cong \H_{\fa}^i(M)$} for all $i>c$ while {$\H^{i}( C^{\textbf{.}}_{M}(\fa))=0$ for all $i\leq c$.}}
\end{defn}
The following proposition is of assistance in the proof of the main result.
\begin{prop} Assume that $M$ is a Gorenstein module over a local ring $R$. Then, with the previous notation the following statements are true.
  \begin{itemize}
\item[(i)]\emph{{There is an exact sequence $$0\rightarrow\Ext^{0}_{R}(C^{\textbf{.}}_{M}(\fa),M)\rightarrow \Hom_{R}(M,M)\otimes_{R}\hat{R}^{\fa}\rightarrow\Ext_{R}^{c}(\H^{c}_{\fa}(M),M)\rightarrow\Ext^{1}_{R}(C^{\textbf{.}}_{M}(\fa),M)\rightarrow 0.$$}}
\item[(ii)]\emph{{There are isomorphisms $\Ext_{R}^{i+c}(\H^{c}_{\fa}(M),M)\cong\Ext^{i+1}_{R}(C^{\textbf{.}}_{M}(\fa),M)$ for all $i>0$}.}
\item[(iii)]\emph{{Suppose that $M$ is relative Cohen-Macaulay with respect to $\fa$. Then  $\Ext_{R}^{c}(\H^{c}_{\fa}(M),M)\cong\Hom_{R}(M,M)\otimes_{R}\hat{R}^{\fa}$ and $\Ext_{R}^{i+c}(\H^{c}_{\fa}(M),M)=0$ for all $i\neq0$}.}
\end{itemize}

\begin{proof}We first notice that, since $M$ is Gorenstein, $R$ is Cohen-Macaulay and $\hat M$ is a Gorenstein $\hat{R}$-module. Now, let $\omega_{\hat{R}}$ be a canonical module of the ring $\hat{R}$. Then, in view of \cite[Exercise 3.3.28]{BH}, $\hat{M}$ is isomorphic to a direct sum of finitely many copies of $\omega_{\hat{R}}$. Hence, one can use \cite[Theorem 3.3.10(c)]{BH}, \cite[Theorem 3.3.4(d)]{BH} and the fact that $\Ext_{\hat{R}}^{i}(\hat{M},\hat{M})\cong\Ext_{R}^{i}(M,M)\otimes_{R}\hat{R}$ for all $i$, to see that $\Ext_{R}^{i}(M,M)=0$ for all $i>0$ and that $\Hom_{R}(M,M)$ is a flat $R$-module.

(i): Let $E^{\textbf{.}}$ be a minimal injective resolution for $M$. Then, since $E^{\textbf{.}}$ is a bounded complex of injective $R$-modules, by applying the functor $\Hom_{R}(-,E^{\textbf{.}})$ on the exact sequence of $R$-complexes in 2.5 we obtain the following short exact
sequence of complexes
$$0\longrightarrow\Hom_{R}({C^{\textbf{.}}_{M}(\fa)},E^{\textbf{.}})\longrightarrow\Hom_{R}(\Gamma_{\fa}(E^{\textbf{.}}),E^{\textbf{.}})\longrightarrow\Hom_{R}(\H_{\fa}^c(M),E^{\textbf{.}})[c]\longrightarrow0.$$
Now, let $\b{x}= x_{1},\ldots ,x_{n}$ be a generating set of the ideal $\fa$, and let $\check{C}_{\b{x}}(R)$ be the $\check{C}$ech complex of $R$ with respect to $\b{x}$. Then, in view of \cite[Theorem 1.1]{SE}, there exists an isomorphism $\textbf{R}\Gamma_{\fa}(M)\simeq\check{C}_{\b{x}}(R)\otimes_R^{\textbf{L}}M$ in the derived category. Now, since $\Ext_{R}^{i}(M,M)=0$ for all $i>0$, we see that $\Hom_{R}(M,M)\simeq\textbf{R}\Hom_{R}(M,M)$. Thus, we get the following isomorphisms
{ \[\begin{array}{rl}
 \textbf{R}\Hom_{R}(\textbf{R}\Gamma_{\fa}(M),M)&\simeq\textbf{R}\Hom_{R}((\check{C}_{\b{x}}(R)\otimes_R^{\textbf{L}}M),M)\\
 &\simeq\textbf{R}\Hom_{R}(\check{C}_{\b{x}}(R),\textbf{R}\Hom_{R}(M,M))\\
 &\simeq\textbf{R}\Hom_{R}(\check{C}_{\b{x}}(R),\Hom_{R}(M,M)),
 \end{array}\]}
in the derived category. Now, since $\Hom_{R}(M,M)$ is flat, one can use \cite[Theorem 1.1]{SE} and the above isomorphisms to deduce that $\H^{i}(\textbf{R}\Hom_{R}(\textbf{R}\Gamma_{\fa}(M),M)=0$ for all $i\neq0$ and that $\H^{0}(\Hom_{R}(\Gamma_{\fa}(E^{\textbf{.}}),E^{\textbf{.}}))\cong\Hom_{R}(M,M)\otimes_{R}\hat{R}^{\fa}$. On the other hand, since $\Gamma_{\fa}((E^{\textbf{.}})^i)=0$ for all $i<c=\gr(\fa,M)$ and $\Hom_{R}(N,X)=\Hom_{R}(N,\Gamma_{\fa}(X))$ for any $\fa$-torsion $R$-module N and for all $R$-modules X, one can deduce that $\Ext_{R}^{i}(\H_{\fa}^{c}(M),M)=0$ for all $i<c$. Therefore, with the aid of the above considerations, the induced long exact
cohomology sequence of the above exact sequence of complexes provides the statements (i) and (ii) of the claim.

(iii): Assume that $M$ is relative Cohen-Macaulay with respect to $\fa$. Then, one can easily check that the complex $C^{\textbf{.}}_{M}(\fa)$ is exact; and so the complex $\Hom_{R}({C^{\textbf{.}}_{M}(\fa)},E^{\textbf{.}})$ is also exact.
Therefore, $\Ext^{i}_{R}(C^{\textbf{.}}_{M}(\fa),M)=\H^{i}(\Hom_{R}({C^{\textbf{.}}_{M}(\fa)},E^{\textbf{.}}))=0$ for all $i$. Hence, the assertion follows from (i) and (ii).
\end{proof}
\end{prop}

The following lemma and definition are needed in the proof of the next result.
\begin{lem}\emph{( See \cite[Proposition 4.1]{HS1}.)} Let $n$ be a non-negative integer and let $M$ be an arbitrarily $R$-module over a local ring $(R,\fm)$. Then the following conditions are equivalent.
 \begin{itemize}
\item[(i)]\emph{{$\H^{i}_{\fm}(M)=0$ for all $i<n$.}
\item[(ii)]{$\Ext_{R}^i(R/\fm,M)=0$ for all $i<n$}.}
\end{itemize}
\end{lem}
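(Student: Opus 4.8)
The plan is to prove the equivalence by induction on $n$, using a dimension shift along the injective hull of $M$ to reduce the statement for $n$ to the statement for $n-1$. For $n=0$ both conditions hold vacuously, so this is the base of the induction, and its only real content is the equivalence $\H^{0}_{\fm}(M)=0\iff\Ext_{R}^{0}(R/\fm,M)=0$; I would prove this by observing that each side is equivalent to $\Gamma_{\fm}(M)=0$. Indeed $\Ext_{R}^{0}(R/\fm,M)=(0:_{M}\fm)$ is contained in $\Gamma_{\fm}(M)$, and conversely if $0\neq x\in\Gamma_{\fm}(M)$ and $k\geq 1$ is least with $\fm^{k}x=0$, then $\fm^{k-1}x$ is a nonzero submodule of $(0:_{M}\fm)$. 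In particular, as soon as either (i) or (ii) holds for some $n\geq 1$, we have $\Gamma_{\fm}(M)=0$.

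Granting $\Gamma_{\fm}(M)=0$, I would set $E=\E_{R}(M)$ and $N=E/M$. Since $M$ is an essential submodule of $E$ and has no $\fm$-torsion, neither does $E$ (a nonzero $\fm$-torsion element of $E$ would generate a submodule meeting $M$, hence an $\fm$-torsion submodule of $M$), so $\Gamma_{\fm}(E)=0$; being injective, $E$ then satisfies $\H^{i}_{\fm}(E)=0$ and $\Ext_{R}^{i}(R/\fm,E)=0$ for all $i\geq 0$. Feeding $0\to M\to E\to N\to 0$ into the long exact cohomology sequences of $\Gamma_{\fm}(-)$ and of $\Hom_{R}(R/\fm,-)$ then yields isomorphisms
\[
\H^{i+1}_{\fm}(M)\cong\H^{i}_{\fm}(N)\qquad\text{and}\qquad\Ext_{R}^{i+1}(R/\fm,M)\cong\Ext_{R}^{i}(R/\fm,N)
\]
for every $i\geq 0$.

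With this in hand the induction is routine. Suppose the equivalence is known for $n-1$ (for all modules over $(R,\fm)$) and that $M$ satisfies (i) for $n$, i.e.\ $\H^{i}_{\fm}(M)=0$ for $i<n$; since $n\geq 1$ we get $\Gamma_{\fm}(M)=0$, so the shift applies and $\H^{i}_{\fm}(N)=0$ for $i<n-1$. By the inductive hypothesis $\Ext_{R}^{i}(R/\fm,N)=0$ for $i<n-1$, whence $\Ext_{R}^{i}(R/\fm,M)=0$ for $1\leq i<n$; together with $\Ext_{R}^{0}(R/\fm,M)=0$ this is (ii) for $n$. The implication (ii)$\Rightarrow$(i) is obtained by reading the very same chain of isomorphisms and of inductive equivalences in reverse. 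This closes the induction.

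As for the difficulty, there is essentially no obstacle, only bookkeeping; the one point deserving care is the base-case identity $(0:_{M}\fm)=0\iff\Gamma_{\fm}(M)=0$ together with $\Gamma_{\fm}(\E_{R}(M))=0$, both of which use nothing beyond the description $\Gamma_{\fm}(M)=\bigcup_{k}(0:_{M}\fm^{k})$ and the essentiality of $M\subseteq\E_{R}(M)$. Alternatively, one could deduce everything at once from the Grothendieck spectral sequence $E_{2}^{p,q}=\Ext_{R}^{p}(R/\fm,\H^{q}_{\fm}(M))\Rightarrow\Ext_{R}^{p+q}(R/\fm,M)$, which exists because $\Hom_{R}(R/\fm,-)=\Hom_{R}(R/\fm,\Gamma_{\fm}(-))$ and $\Gamma_{\fm}$ preserves injectivity: vanishing of the rows $q<n$ gives (i)$\Rightarrow$(ii), while the corner term $\Ext_{R}^{0}(R/\fm,\H^{q_{0}}_{\fm}(M))$ at the smallest $q_{0}$ with $\H^{q_{0}}_{\fm}(M)\neq 0$ is a permanent cycle equal to $\Ext_{R}^{q_{0}}(R/\fm,M)$ and is nonzero (a nonzero $\fm$-torsion module has nonzero $\fm$-socle), giving (ii)$\Rightarrow$(i).
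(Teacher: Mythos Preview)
Your argument is correct. The induction via dimension shift along $0\to M\to \E_{R}(M)\to N\to 0$ is sound: the key points are that $(0:_{M}\fm)=0\iff\Gamma_{\fm}(M)=0$, that essentiality transfers $\Gamma_{\fm}(M)=0$ to $\Gamma_{\fm}(\E_{R}(M))=0$, and that injectivity of $\E_{R}(M)$ then kills all its higher $\H^{i}_{\fm}$ and $\Ext^{i}_{R}(R/\fm,-)$, so the two long exact sequences collapse to the desired shift isomorphisms. The alternative spectral-sequence argument you sketch is equally valid and is in fact the standard route.

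There is, however, nothing in the paper to compare against: the paper does not give its own proof of this lemma but simply quotes it as \cite[Proposition 4.1]{HS1}. So your proposal supplies a proof where the paper supplies only a citation. The original proof in \cite{HS1} proceeds via the Grothendieck spectral sequence you mention at the end, so your second approach coincides with theirs, while your primary inductive argument is a more elementary (spectral-sequence-free) route to the same conclusion.
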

\begin{defn}\emph{Let $(R,\fm)$ be a local ring and let $\fa$ be a proper ideal of $R$. Then,
we say that a non-zero $R$-module $M$ is relative Cohen-Macaulay in $\Supp_{R}(M/\fa M)\setminus\{\fm\}$, whenever $M_{\fp}$ is relative Cohen-Macaulay with respect to $\fa R_{\fp}$ and $\cd(\fa R_{\fp}, M_{\fp})=c$ for all $\fp\in\Supp_{R}(M/\fa M)\setminus\{\fm\}$, where $c=\h_{M}\fa$.}
\end{defn}
The following theorem extends the main result \cite[Theorem 0.1]{HS1}.
\begin{thm}Let $\fa$ be an ideal of a local ring $(R,\fm,k)$ and $M$ a Gorenstein $R$-module. Suppose that $M$ is relative Cohen-Macaulay in \emph{$\Supp_{R}(M/\fa M)\setminus\{\fm\}$}. Set \emph{$c :=\h_{M}\fa$} and \emph{$d:=\dim M/\fa M$}. Then the following statements are equivalent.
 \begin{itemize}
\item[(i)]\emph{{$\H^{i}_\fa(M)=0$ for all $i\neq c$, i.e $M$ is relative Cohen-Macaulay with respect to $\fa$.}
\item[(ii)]{$\H^d_{\fm}(\H^{c}_{\fa}({M}))\cong \E_{R}(k)^{\tiny{r(M)}}$ and $\H^i_{\fm}(\H^{c}_{\fa}({M}))=0$ for all $i\neq d$. }
\item[(iii)]{$\Ext_{R}^{d}(k,\H^{c}_{\fa}(M))\cong k^{\tiny{r(M)}}$ and $\Ext_{R}^{i}(k,\H^{c}_{\fa}(M))=0$ for all $i\neq d$}.
\item[(iv)]{$\mu^i(\fm, \H_{\fa}^c(M))={\tiny{r(M)}}\delta_{di}$}.}
\end{itemize}
\emph{Moreover, if one of the above statements holds, then $\Ext^{i}_{R}(\H^{c}_\fa(M),\H^{c}_\fa(M))=0$ for all $i\neq0$ and  $\Hom_{R}(\H^{c}_\fa(M),\H^{c}_\fa(M))\cong\Hom_{R}(M,M)\otimes_{R}\hat{R}^{\fa}$, where $\hat{R}^{\fa}$ denotes the $\fa$-adic completion of $R$.}

\begin{proof}(i)$\Rightarrow$(ii): First, we can use \cite[Proposition 2.8]{MZ} and the assumption to see that $\H^i_{\fm}(\H^{c}_{\fa}({M}))\cong\H^{i+c}_{\fm}(M)$ for all $i\geq0$. Now, since $M$ is Cohen-Macaulay of dimension $n$, $\H^d_{\fm}(\H^{c}_{\fa}({M}))\cong\H^{n}_{\fm}(M)$ and $\H^i_{\fm}(\H^{c}_{\fa}({M}))=0$ for all $i\neq d$. On the other hand, in view of \cite[Theorem 2.5]{MZ} and the assumption, $\H^{n}_{\fm}(M)$ is an injective $R$-module. Therefore, one can use \cite[Corollay 2.2]{MZ} to achieve the isomorphism $\H^{n}_{\fm}(M)\cong{\E_{R}(k)}^{\tiny{r(M)}}.$

(ii)$\Rightarrow$(iii): It follows from Lemma 2.7 that $\Ext_{R}^{i}(k,\H^{c}_{\fa}(M))=0$ for all $i<d$. On the other hand, since $\H^d_{\fm}(\H^{c}_{\fa}({M}))$ is injective, one can use \cite[Proposition 2.1]{MZ} to see that $\Ext_{R}^{i}(k,\H^{c}_{\fa}(M))=0$ for all $d<i$ and that $\Ext_{R}^{d}(k,\H^{c}_{\fa}(M))\cong\Hom_{R}(k,\H^d_{\fm}(\H^{c}_{\fa}({M})))\cong k^{\tiny{r(M)}}$, and hence the assertion is done. The implications (iii)$\Leftrightarrow$(iv) is clear.

(iii)$\Rightarrow$(ii): First, in view of Lemma 2.7, $\H^i_{\fm}(\H^{c}_{\fa}({M}))=0$ for all $i<d$. Now, since $\dim_{R}\H^{c}_{\fa}({M})\leq\dim_{R}M/\fa M=d$, one can use the vanishing theorem to see that $\H^i_{\fm}(\H^{c}_{\fa}({M}))=0$ for all $d<i$. Therefore, \cite[Proposition 2.1]{MZ} implies that $\Hom_{R}(R/\fm, \H^d_{\fm}(\H^{c}_{\fa}({M})))\cong k^{\tiny{r(M)}}$, and so it is an Artinian $R$-module. Hence, by \cite[Theorem 7.1.2]{BRS}, $\H^d_{\fm}(\H^{c}_{\fa}({M}))$ is Artinian. Thus, one can use \cite[Corollary 2.2]{MZ} to see that $\mu^1(\fm,\H^d_{\fm}(\H^{c}_{\fa}({M})))=\mu^{d+1}(\fm,\H^{c}_{\fa}({M}))=0$ and $\mu^0(\fm,\H^d_{\fm}(\H^{c}_{\fa}({M})))=r(M)$. Hence $\H^d_{\fm}(\H^{c}_{\fa}({M}))$ is an injective $R$-module, and so we obtain the isomorphism $\H^d_{\fm}(\H^{c}_{\fa}({M}))\cong \E_{R}(k)^{\tiny{r(M)}}$.

(ii)$\Rightarrow$(i): Since $M$ is relative Cohen-Macaulay in $\Supp_{R}(M/\fa M)\setminus\{\fm\}$, one can use \cite[Proposition 2.8]{MZ} to deduce that $\H_{\fp R_{\fp}}^i(\H_{\fa R_{\fp}}^c(M_{\fp}))\cong \H_{\fp R_{\fp}}^{i+c}(M_{\fp})$ for all prime ideals $\fp$ in $\Supp_{R}(M/\fa M)\setminus\{\fm\}$ and for all $i\in\mathbb{Z}$. On the other hand, since $M$ is Gorenstein, one can use \cite[Theorem 2.5]{MZ} and \cite[Corollay 2.2]{MZ} to see that $\H_{\fm}^{n}({M})\cong\E_{R}(k)^{\tiny{r(M)}}$. Therefore, the assertion follows from \cite[Theorem 4.4]{HS}. Next, one can use Proposition 2.6(iii) and \cite[Proposition 2.1]{MZ} to establish the final assertion.

\end{proof}
\end{thm}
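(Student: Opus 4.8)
The plan is to establish the chain of equivalences by running through the implications in the cyclic order (i)$\Rightarrow$(ii)$\Rightarrow$(iii)$\Leftrightarrow$(iv) and then closing the loop with (ii)$\Rightarrow$(i), supplementing with (iii)$\Rightarrow$(ii) to make the middle conditions freely interchangeable. The unifying technical device is the truncation complex $C^{\textbf{.}}_{M}(\fa)$ from Definition 2.5 together with the spectral-sequence/change-of-rings machinery relating $\H^i_{\fm}(\H^c_{\fa}(M))$ to $\H^{i+c}_{\fm}(M)$ — this is exactly the content of \cite[Proposition 2.8]{MZ}, which applies under the hypothesis that $M$ is relative Cohen-Macaulay on the punctured support $\Supp_R(M/\fa M)\setminus\{\fm\}$. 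First I would record the standing reductions: since $M$ is Gorenstein, $R$ is Cohen-Macaulay, $\dim M = \depth M = n = \dim R$, and (by passing to the completion and using that $\hat M$ is a direct sum of copies of the canonical module, as in the proof of Lemma 2.6) $\Ext^i_R(M,M)=0$ for $i>0$ and $\Hom_R(M,M)$ is flat. I would also note $\H^n_{\fm}(M)\cong \E_R(k)^{r(M)}$, which follows from $M$ being Gorenstein via \cite[Theorem 2.5]{MZ} and \cite[Corollary 2.2]{MZ}.

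For (i)$\Rightarrow$(ii): assuming $M$ is relative Cohen-Macaulay with respect to $\fa$, the truncation complex is exact, so \cite[Proposition 2.8]{MZ} gives $\H^i_{\fm}(\H^c_{\fa}(M))\cong \H^{i+c}_{\fm}(M)$ for all $i\ge 0$; since $M$ is Cohen-Macaulay of dimension $n$ this vanishes unless $i+c=n$, i.e.\ unless $i=n-c=d$, and in that degree it is $\H^n_{\fm}(M)\cong \E_R(k)^{r(M)}$. For (ii)$\Rightarrow$(iii) and (iii)$\Rightarrow$(ii) the key inputs are Lemma 2.7 (converting $\H^i_{\fm}$-vanishing below $d$ into $\Ext^i_R(k,-)$-vanishing below $d$ and back), the dimension bound $\dim_R \H^c_{\fa}(M)\le \dim_R M/\fa M = d$ together with Grothendieck vanishing \cite[Theorem 6.1.2]{BRS}, and the standard fact \cite[Proposition 2.1]{MZ} that for a module $X$ with $\H^i_{\fm}(X)=0$ off degree $d$ one has $\Ext^d_R(k,X)\cong \Hom_R(k,\H^d_{\fm}(X))$; injectivity of $\H^d_{\fm}(\H^c_{\fa}(M))$ is detected via its Bass numbers and Artinianness (\cite[Theorems 7.1.2, 10.2.8]{BRS}, \cite[Corollary 2.2]{MZ}). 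The equivalence (iii)$\Leftrightarrow$(iv) is just the definition of Bass numbers.

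The hard part is (ii)$\Rightarrow$(i): here one cannot assume exactness of the truncation complex and must instead run the argument on the punctured support. The plan is: for every $\fp\in\Supp_R(M/\fa M)\setminus\{\fm\}$, localize and apply \cite[Proposition 2.8]{MZ} to get $\H^i_{\fp R_{\fp}}(\H^c_{\fa R_{\fp}}(M_{\fp}))\cong \H^{i+c}_{\fp R_{\fp}}(M_{\fp})$; combined with $M_{\fp}$ being a (Gorenstein, hence Cohen-Macaulay) $R_{\fp}$-module that is relative Cohen-Macaulay with respect to $\fa R_{\fp}$, this pins down the local cohomology of $\H^c_{\fa}(M)$ at all non-maximal primes, and hypothesis (ii) pins it down at $\fm$. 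Feeding this, together with $\H^n_{\fm}(M)\cong \E_R(k)^{r(M)}$, into the criterion \cite[Theorem 4.4]{HS} — the characterization of when the truncation complex is exact, i.e.\ when $M$ is relative Cohen-Macaulay with respect to $\fa$ — yields (i). Finally, for the "moreover" clause: once (i) holds, Lemma 2.6(iii) gives $\Ext^c_R(\H^c_{\fa}(M),M)\cong \Hom_R(M,M)\otimes_R \hat R^{\fa}$ and $\Ext^{i+c}_R(\H^c_{\fa}(M),M)=0$ for $i\ne 0$; I would then convert these $\Ext$-into-$M$ statements into $\Ext$-into-$\H^c_{\fa}(M)$ statements by another application of \cite[Proposition 2.1]{MZ} (using that $\H^c_{\fa}(M)$ is, up to the shift, represented by $\ugamma_{\fa}(M)$ and that $M$ is Gorenstein so that $\uhom_R(-,M)$ behaves like a dualizing functor on the relevant subcategory), obtaining $\Hom_R(M,M)\otimes_R \hat R^{\fa}\cong \Hom_R(\H^c_{\fa}(M),\H^c_{\fa}(M))$ and the vanishing of the higher self-$\Ext$ of $\H^c_{\fa}(M)$. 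I expect the bookkeeping in this last conversion — keeping track of the shift by $c$ and the passage to the $\fa$-adic completion — to be the most delicate routine point, while the genuine conceptual obstacle is the reduction to the punctured support needed for (ii)$\Rightarrow$(i).
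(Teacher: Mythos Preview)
Your proposal is correct and follows essentially the same route as the paper's proof: the same cyclic pattern (i)$\Rightarrow$(ii)$\Rightarrow$(iii)$\Leftrightarrow$(iv), (iii)$\Rightarrow$(ii), (ii)$\Rightarrow$(i), with the same key inputs at each step (\cite[Proposition 2.8]{MZ} for the composite local-cohomology isomorphism, Lemma 2.7 and \cite[Proposition 2.1]{MZ} together with Grothendieck vanishing and Artinianness for the (ii)/(iii) passage, \cite[Theorem 4.4]{HS} on the punctured support for (ii)$\Rightarrow$(i), and Lemma 2.6(iii) with \cite[Proposition 2.1]{MZ} for the final clause). Your anticipated delicate bookkeeping in the ``moreover'' conversion is handled in the paper by a direct appeal to 2.6(iii) and \cite[Proposition 2.1]{MZ} without further ado.
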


Next, we provide an example to show that even if, for all $\fp\in\Supp_{R}(M/\fa M)\setminus\{\fm\}$, $M_{\fp}$ is relative Cohen-Macaulay with respect to $\fa R_{\fp}$, but with different cohomological dimensions, then Theorem 2.9 is no longer true.
\begin{exam}\emph{Let $k$ be a filed and $R=k[[x,y,z]]$. Set $\fa:=(xy,xz)$. Then, we have $\gr(\fa,R)=\h\fa=1$. Also, one can use Mayer-Vietoris sequence to see that $\cd(\fa,R)=2$ and $\H_{\fa}^1(R)\cong\H_{(x)}^1(R)$. On the other hand, it is easy to see that, for all prime ideals $\fp$ in $\V(\fa)\setminus\{\fm\}$, $\fa R_{\fp}=(x)R_{\fp}$ or $\fa R_{\fp}=(y,z)R_{\fp}$; and hence $R_{\fp}$ is relative Cohen-Macaulay with respect to $\fa R_{\fp}$ for all prime ideals $\fp$ in $\V(\fa)\setminus\{\fm\}$ with $\cd(\fa R_{\fp},R_{\fp})\in\{1,2\}$. Next, since $\H_{(x)}^i(R)=0$ for all $i\neq1$, one can see that $\H_{\fm}^i(\H_{(x)}^1(R))\cong\H_{\fm}^{i+1}(R)$ for all $i$. Hence, by using the fact that $R$ is Gorenstein, we have $\H^d_{\fm}(\H^{1}_{\fa}({R}))\cong \E_{R}(k)$ and $\H^i_{\fm}(\H^{1}_{\fa}({R}))=0$ for all $i\neq d$, where $d=\dim R/\fa$. }
\end{exam}

Next, we single out a certain case of the above theorem for $M=\omega_{R}$, where $\omega_{R}$ denotes a canonical module of a Cohen-Macaulay ring $R$.
\begin{cor}Let $(R,\fm, k)$ be a local Cohen-Macaulay ring which admits a canonical $R$-module $\omega_{R}$ and let $\fa$ be an ideal of $R$ with \emph{$\h_{R}\fa=c$} and \emph{$\dim R/ \fa =d$}. Suppose that $\omega_{R}$ is relative Cohen-Macaulay in \emph{$\V(\fa)\setminus\{\fm\}$}. Then the following statements are equivalent.
\begin{itemize}
\item[(i)]{\emph{$\H^{i}_\fa(\omega_{R})=0$ for all $i\neq c$.}}
\item[(ii)]{\emph{$\H^d_{\fm}(\H^{c}_{\fa}({\omega_{R}}))\cong \E_{R}(k)$ and $\H^i_{\fm}(\H^{c}_{\fa}({\omega_{R}}))=0$ for all $i\neq d$. }}
\item[(iii)]{\emph{$\Ext_{R}^{d}(k,\H^{c}_{\fa}(\omega_{R}))\cong k$ and $\Ext_{R}^{i}(k,\H^{c}_{\fa}(\omega_{R}))=0$ for all $i\neq d$}.}
\item[(iv)]{\emph{$\mu^i(\fm,\H_{\fa}^c(\omega_{R}))=\delta_{di}$}}.
\end{itemize}
\emph{Moreover, if $\fa$ satisfies the above conditions, then $\Ext^{i}_{R}(\H^{c}_\fa(\omega_{R}),\H^{c}_\fa(\omega_{R}))=0$ for all $i\neq0$ and $\hat{R}^{\fa}\cong\Hom_{R}(\H^{c}_\fa(\omega_{R}),\H^{c}_\fa(\omega_{R}))$, where $\hat{R}^{\fa}$ denotes the $\fa$-adic completion of $R$.}
\end{cor}
\begin{proof} We first notice that $\Supp(\omega_{R}/\fa \omega_{R})=\Supp(\omega_R)\cap\V(\fa)=\V(\fa)$. Therefore, $\dim(\omega_{R}/\fa \omega_{R})=\dim( R/\fa)$ and  $\h_{R}\fa=\h_{\omega_{R}}\fa$. On the other hand, one can use \cite[Proposition 3.3.11]{BH} to see that $\omega_{R}$ is a Gorenstein $R$-module of type 1 and that $\Hom_{R}(\omega_{R},\omega_{R})\cong R$. Hence, the assertion follows from Theorem 2.9.

\end{proof}

The following corollary, which is an immediate consequence of Corollary 2.11, has been proved in \cite[Theorem 0.1]{HS1} as a main result.

\begin{cor} Let $(R,\fm, k)$ be a local Gorenstein ring and let $\fa$ be an ideal of $R$. Set \emph{$\h_{R}\fa=c$} and \emph{$\dim R/ \fa =d$}. Suppose that $R$ is relative Cohen-Macaulay in \emph{$\V(\fa)\setminus\{\fm\}$}. Then the following statements are equivalent.
\begin{itemize}
\item[(i)]{\emph{$\H^{i}_\fa(R)=0$ for all $i\neq c$, i.e $R$ is relative Cohen-Macaulay with respect to $\fa$.}}
\item[(ii)]{\emph{$\H^d_{\fm}(\H^{c}_{\fa}({R}))\cong \E_{R}(k)$ and $\H^i_{\fm}(\H^{c}_{\fa}({R}))=0$ for $i\neq d$ }.}
\item[(iii)]{\emph{$\Ext_{R}^{d}(k,\H^{c}_{\fa}(R))\cong k$ and $\Ext_{R}^{i}(k,\H^{c}_{\fa}(R))=0$ for all $i\neq d$}.}
\item[(iv)]{\emph{$\mu^i(\fm,\H_{\fa}^c(R))=\delta_{di}$}}.
\end{itemize}
\emph{Moreover, if one of the above conditions is satisfied, then $\hat{R}^{\fa}\cong\Hom_{R}(\H^{c}_\fa(R),\H^{c}_\fa(R))$ and $\Ext^{i}_{R}(\H^{c}_\fa(R),\H^{c}_\fa(R))=0$ for all $i\neq0$.}
\end{cor}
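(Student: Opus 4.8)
The plan is to read this corollary off from 2.10 by taking $\omega_{R}=R$. First I would recall the standard fact that a Noetherian local ring $R$ is Gorenstein precisely when it is Cohen-Macaulay and $R$ itself is a canonical module of $R$, so that one may choose $\omega_{R}\cong R$ (see, e.g., \cite[Theorem 3.3.7]{BH}). In particular, a Gorenstein local ring meets the ring-theoretic hypotheses of 2.10.

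Next I would match the remaining data under the identification $\omega_{R}\cong R$. Since $\Supp(\omega_{R}/\fa\omega_{R})=\V(\fa)$ and $\omega_{R}\cong R$, the assumption that $R$ is relative Cohen-Macaulay in $\V(\fa)\setminus\{\fm\}$ is literally the corresponding hypothesis of 2.10 for $\omega_{R}$, while the invariants $c=\h_{R}\fa$ and $d=\dim R/\fa$ are unchanged. Moreover $\Hom_{R}(\omega_{R},\omega_{R})\cong\Hom_{R}(R,R)\cong R$, and the type of $\omega_{R}$ (equivalently of the Gorenstein ring $R$) is $1$; this is exactly the normalisation already built into the statement of 2.10.

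It then remains only to transport the conclusions. Since the functors $\H_{\fa}^{c}(-)$, and thereafter $\H_{\fm}^{i}(-)$, $\Ext_{R}^{i}(k,-)$ and $\mu^{i}(\fm,-)$, all respect isomorphisms, applying them to $\omega_{R}\cong R$ turns each of the four conditions (i)--(iv) of 2.10 (with $\omega_{R}$ replaced by $R$) into the corresponding condition of the present corollary; similarly the final clause of 2.10 becomes $\hat{R}^{\fa}\cong\Hom_{R}(\H^{c}_\fa(R),\H^{c}_\fa(R))$ together with $\Ext^{i}_{R}(\H^{c}_\fa(R),\H^{c}_\fa(R))=0$ for all $i\neq0$. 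Invoking 2.10 then yields both the equivalence of (i)--(iv) and the supplementary assertion.

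I do not expect a genuine obstacle here: all the mathematical content has been front-loaded into 2.9 and 2.10, and the present corollary is a pure specialisation. The only step that deserves a comment is the (harmless) check that a Gorenstein ring really does fit into the framework of 2.10 --- which it does, via $\omega_{R}\cong R$ and type $1$ --- so that no nontrivial multiplicity $r(\cdot)$ appears and the two statements correspond term by term.
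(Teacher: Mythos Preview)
Your proposal is correct and matches the paper's own approach: the paper simply declares this corollary an immediate consequence of 2.10, and your specialisation $\omega_{R}\cong R$ (using that a Gorenstein local ring is Cohen--Macaulay with $R$ as canonical module and type $1$) is exactly how one reads it off.
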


{$\mathbf{Acknowledgements}.$} I am very grateful to Professor Hossein Zakeri for his kind comments and assistance in the preparation of this paper. Also, the author is grateful to the referee for careful reading and for suggesting several improvements of the manuscript.

%%%%%%%%%%%%%%%%%%%%%%%%%%%%%%%%%%%%%%%%%%%%%%%%%%%%%%%%%%%%%%%%%%% Section3%%%%%%%%%%%%%%%%%%%%%%%%%%%%%%%%%
%%%%%%%%%%%%%%%%%%%%%%%%%%%%%%%%%%%%%%%%%%%%%

%%% -----------------------------------
%%% -----------------------------------
%%% -----------------------------------
%\bibliographystyle{amsplain}
%%% -----------------------------------
%%% -----------------------------------
%%% -----------------------------------

%%% -----------------------------------
%%% -----------------------------------
%%% -----------------------------------
\end{document}